\documentclass[12pt,reqno]{amsart}
\usepackage[latin1]{inputenc}
\usepackage{subfigure}

\usepackage{amsmath}
\usepackage{amssymb}
\usepackage{ifthen}
\usepackage{xargs}
\usepackage{booktabs}
\usepackage{paralist}

\usepackage{tikz}
\usetikzlibrary{patterns}

\usepackage[vlined,linesnumbered,norelsize]{algorithm2e}

\usepackage{stmaryrd}

\setlength{\oddsidemargin}{0in}
\setlength{\evensidemargin}{0in}
\setlength{\marginparwidth}{0in}
\setlength{\marginparsep}{0in}
\setlength{\marginparpush}{0in}
\setlength{\topmargin}{0in}
\setlength{\headheight}{0in}
\setlength{\headsep}{.3in}
\setlength{\footskip}{.3in}
\setlength{\textheight}{8.7in}
\setlength{\textwidth}{6.3in}
\setlength{\parskip}{4pt}


\newcommand{\shadetheboxes}[1]{
	\foreach \x/\y in {#1}
      	\fill[pattern color = black!75, pattern=north east lines] (\x,\y) rectangle +(1,1);
	}

\newcommand{\shadetherects}[1]{
	\foreach \x/\y/\xt/\yt in {#1}
      	\fill[pattern color = black!75, pattern=north east lines] (\x,\y) rectangle (\xt,\yt);
	}

\newcommand{\drawthegrid}[1]{
	\draw (0.01,0.01) grid (#1+0.99,#1+0.99);
	}
	
\newcommand{\drawverticallines}[3]{
	\foreach \x in {#2}
		\draw[line width=#3] (\x+0.01,0.01) -- (\x+0.01,#1+0.99);
	}

\newcommand{\drawhorizontallines}[3]{
	\foreach \y in {#2}
		\draw[line width=#3] (0.01,\y+0.01) -- (#1+0.99,\y+0.01);
	}
	
\newcommand{\drawtheclpattern}[1]{
	\foreach \x/\y in {#1}
      	\filldraw (\x,\y) circle (6pt);
	}
	
\newcommand{\drawtheclpatternwhite}[1]{
	\foreach \x/\y in {#1}
      	\draw[fill=white] (\x,\y) circle (6pt);
	}
	
\newcommand{\drawtheclpatternwhitebig}[1]{
	\foreach \x/\y in {#1}
		\draw[fill=white] (\x,\y) circle (11pt);
	}
	
\newcommand{\drawspecialbox}[1]{
	\foreach \x/\y/\z/\w/\A in {#1}
		{
       		\fill[color = white!100, opacity=1, rounded corners = 1.5pt] (\x+0.125,\y+0.125) rectangle (\z-0.125,\w-0.125);
       		\draw[color = black, rounded corners = 1.5pt] (\x+0.125,\y+0.125) rectangle (\z-0.125,\w-0.125);
       		\fill[black] (\x/2+\z/2,\y/2+\w/2) node {$\scriptstyle\A$};
       	}
    }

\newcommand{\onetwo}{\mpatternnl{scale=0.2}{2}{1/1,2/2}{}}		

\newcommand{\drawsolidshadedbox}[1]{
	\foreach \x/\y/\z/\w/\A in {#1}
		{
       		\fill[color = gray!50, opacity=1, rounded corners=1.5pt] (\x+0.125,\y+0.125) rectangle (\z-0.125,\w-0.125);
       		\draw[color = black, rounded corners=1.5pt] (\x+0.125,\y+0.125) rectangle (\z-0.125,\w-0.125);
       		\fill[black] (\x/2+\z/2,\y/2+\w/2) node {$\scriptstyle\A$};
       	}
    }

\newcommand{\drawascendingrestrictions}[1]{
	\foreach \x/\y/\z/\w in {#1}
		{
       		\fill[color = white!100, opacity=1, rounded corners=1.5pt] (\x+0.125,\y+0.125) rectangle (\z-0.125,\w-0.125);
       		\fill[pattern color = black!65, pattern=north east lines, rounded corners=1.5pt] (\x+0.125,\y+0.125) rectangle (\z-0.125,\w-0.125);
       		\draw[color = black, rounded corners=1.5pt] (\x+0.125,\y+0.125) rectangle (\z-0.125,\w-0.125);
       		\fill[black] (\x/2+\z/2,\y/2+\w/2) node {\onetwo};
       	}
    }    

\newcommand{\mpattern}[4]{										
  \raisebox{0.6ex}{
  \begin{tikzpicture}[scale=0.35, baseline=(current bounding box.center), #1]
  	\useasboundingbox (0.0,-0.1) rectangle (#2+1.4,#2+1.1);
	
    \shadetheboxes{#4}
    
    \drawthegrid{#2}
    
    \drawtheclpattern{#3}
    
  \end{tikzpicture}}
}

\newcommand{\mpatternpic}[4]{										
	\scalebox{#1}
	{  
		\begin{tikzpicture}[baseline=(current bounding box.center)]
		
			\foreach \x/\y in {#4}
				\fill[pattern color = black!65, pattern=north east lines] (\x,\y) rectangle +(1,1);

			\drawthegrid{#2}
			\drawtheclpattern{#3}
		
		\end{tikzpicture}
	}
}

\newcommand{\mpatternnl}[4]{										
  \raisebox{0.6ex}{
  \begin{tikzpicture}[scale=0.35, baseline=(current bounding box.center), #1]
  	\useasboundingbox (0.85,-0.1) rectangle (#2+1.4,#2+1.1);
	
    \shadetheboxes{#4}
    
    \drawtheclpattern{#3}
    
  \end{tikzpicture}}
}

\newcommand{\mpatternwwo}[6]{									
  \raisebox{0.6ex}{
  \begin{tikzpicture}[scale=0.35, baseline=(current bounding box.center), #1]
  \useasboundingbox (0.0,-0.1) rectangle (#2+1.4,#2+1.1);
  
    \shadetheboxes{#6}
    
    \drawthegrid{#2}
    
    \drawtheclpatternwhite{#4}
    \drawtheclpatternwhitebig{#5}
    \drawtheclpattern{#3}
    
  \end{tikzpicture}}
}

\newcommand{\mmpattern}[5]{									
  \raisebox{0.6ex}{
  \begin{tikzpicture}[scale=0.35, baseline=(current bounding box.center), #1]
  \useasboundingbox (0.0,-0.1) rectangle (#2+1.4,#2+1.1);
    
    \shadetheboxes{#4}
    
    \drawthegrid{#2}
    
    \drawspecialbox{#5}
    
    \drawtheclpattern{#3}

  \end{tikzpicture}}
}

\newcommandx{\descriptionpatt}[9][8={},9={}]
{
	\raisebox{0.6ex}{
  \begin{tikzpicture}[scale=0.35, baseline=(current bounding box.center), #1]
  	\useasboundingbox (0.0,-0.1) rectangle (#2+1.4,#3+1.1);
		
		\shadetherects{#7}

		\foreach \pos/\type in {#4}
		{
			\ifthenelse{\equal{\type}{v}}
			{
				\drawverticallines{#3}{\pos}{0.8pt}
			}
			{
				\drawhorizontallines{#2}{\pos}{0.8pt}
			}
		}

		\foreach \pos/\type in {#5}
		{
			\ifthenelse{\equal{\type}{v}}
			{
				\drawverticallines{#3}{\pos}{0.4pt}
			}
			{
				\ifthenelse{\equal{\type}{d}}
				{
					\draw[densely dashed] (\pos,0) -- (\pos,#3+1);
				}
				{
					\drawhorizontallines{#2}{\pos}{0.4pt}
				}
			}
		}
		
		\drawascendingrestrictions{#8}
		\drawspecialbox{#9}
		
		\foreach \x/\y/\type in {#6}
		{
			\ifthenelse{\equal{\type}{a}}
			{
				\draw (\x,\y) circle (6pt);
				\filldraw (\x,\y) circle (3pt);
			}
			{
				\filldraw (\x,\y) circle (5pt);
			}
		}
    
  \end{tikzpicture}}
} 

\newcommand{\descpatt}[8]
{
	\begin{tikzpicture}[scale=0.35, baseline=(current bounding box.center), #1]
		\shadetherects{#5}
		\draw (0.01,0.01) grid (#2+1-0.01,#3+1-0.01);
		
		\drawsolidshadedbox{#7}
		\drawspecialbox{#6}
		\drawspecialbox{#8}
		\foreach \x/\y/\type in {#4}
		{
			\filldraw (\x,\y) circle (5pt);
		}
  \end{tikzpicture}
}

\newcommand{\descpatte}[8]
{
	\begin{tikzpicture}[scale=0.35, baseline=(current bounding box.center), #1]
		\shadetherects{#5}
		\draw (0.01,0.01) grid (#2+1-0.01,#3+1-0.01);
		
		\drawsolidshadedbox{#7}
		\drawspecialbox{#6}
		\drawspecialbox{#8}

		\foreach \x/\y/\type in {#4}
		{
			\ifthenelse{\equal{\type}{s}}
			{
				\draw (\x,\y) circle (6pt);
				\filldraw (\x,\y) circle (3pt);
			}
			{
				\filldraw (\x,\y) circle (5pt);
			}
		}
		
  \end{tikzpicture}
}

\newcommand{\mmpatterna}[5]{
  \raisebox{0.6ex}{
  \begin{tikzpicture}[scale=0.35, baseline=(current bounding box.center), #1]
    \useasboundingbox (0.0,-0.1) rectangle (#2+1.4,#2+1.1);
  
    \shadetheboxes{#4}
    
    \drawthegrid{#2}
    
    \fill[color = white!100, opacity=1, rounded corners = 1.5pt] (2,3.9) -- (1.1,3.9) -- (1.1,2.1) -- (2.1,2.1) -- (2.1,1.1) -- (3.9,1.1) -- (3.9,2.9) -- (2.9,2.9) -- (2.9,3.9) -- (2,3.9);
    \draw[color = black, rounded corners = 1.5pt] (2,3.9) -- (1.1,3.9) -- (1.1,2.1) -- (2.1,2.1) -- (2.1,1.1) -- (3.9,1.1) -- (3.9,2.9) -- (2.9,2.9) -- (2.9,3.9) -- (2,3.9);
    
    \fill[black] (2.5,2.5) node {$\scriptstyle1$};
    
    \drawspecialbox{#5}
    
    \drawtheclpattern{#3}
    
  \end{tikzpicture}}
}

\newcommand{\mmpatternb}[5]{
  \raisebox{0.6ex}{
  \begin{tikzpicture}[scale=0.35, baseline=(current bounding box.center), #1]
    \useasboundingbox (0.0,-0.1) rectangle (#2+1.4,#2+1.1);
  
    \shadetheboxes{#4}
    
    \drawthegrid{#2}
       
    \fill[color = white!100, opacity=1, rounded corners = 1.5pt] (4,4.9) -- (3.1,4.9) -- (3.1,4.1) -- (4.1,4.1) -- (4.1,3.1) -- (4.9,3.1) -- (4.9,4.9) -- (4,4.9);
    \draw[color = black, rounded corners = 1.5pt] (4,4.9) -- (3.1,4.9) -- (3.1,4.1) -- (4.1,4.1) -- (4.1,3.1) -- (4.9,3.1) -- (4.9,4.9) -- (4,4.9);
    
    \fill[color = white!100, opacity=1, rounded corners = 1.5pt] (0.1,1) -- (0.1,1.9) -- (0.9,1.9) -- (0.9,0.9) -- (1.9,0.9) -- (1.9,0.1) -- (0.1,0.1) -- (0.1,1);
    \draw[color = black, rounded corners = 1.5pt] (0.1,1) -- (0.1,1.9) -- (0.9,1.9) -- (0.9,0.9) -- (1.9,0.9) -- (1.9,0.1) -- (0.1,0.1) -- (0.1,1);
        
    \drawspecialbox{#5}
    
    \drawtheclpattern{#3}
    
  \end{tikzpicture}}
}

\newcommand{\Av}{\mathrm{Av}}

\newcommand{\subwords}{\mathrm{subwords}}
\newcommand{\fl}{\mathrm{fl}}
\newcommand{\sh}{\mathrm{sh}}
\newcommand{\forb}{\mathrm{forb}}
\newcommand{\inv}{\mathrm{inv}}

\newcommand{\stein}{\mathsf{Forb}}
\newcommand{\mine}{\mathsf{Mine}}
\newcommand{\bisc}{\mathsf{BiSC}}

\newcommand{\dbrac}[1]{{\llbracket #1 \rrbracket}}  

\newcommand{\cand}{\mathsf{cand}}

\pgfmathsetmacro{\patttablecale}{1.05}
\pgfmathsetmacro{\pattdispscale}{0.80}
\pgfmathsetmacro{\patttextscale}{0.6}


\newtheorem{theorem}{Theorem}[section]
\newtheorem{proposition}[theorem]{Proposition}
\newtheorem{lemma}[theorem]{Lemma}

\usepackage[pdftex]{hyperref}

\begin{document}

\title[Algorithms for discovering and proving theorems]{Algorithms for discovering and proving theorems
about permutation patterns}

\author{Hjalti Magnusson \and Henning Ulfarsson}

\address{School of Computer Science, Reykjav\'ik University, Menntavegi 1, 101 Reykjav\'ik, Iceland}

\email{hjaltim07@ru.is, henningu@ru.is}

\thanks{The authors are supported by grant no.\ 090038013-4 from the Icelandic Research Fund.}

\begin{abstract}
We present an algorithm, called BiSC, that describes the patterns avoided by a given set of permutations. It automatically conjectures the statements of known theorems such as the descriptions of stack-sortable (Knuth 1975) and West-2-stack-sortable permutations (West 1990), smooth (Lakshmibai and Sandhya 1990) and forest-like permutations (Bousquet-M{\'e}lou and Butler 2007), and simsun permutations (Br{\"a}nd{\'e}n and Claesson 2011). The algorithm has also been used to discover new theorems and conjectures related to Young tableaux, Wilf-equivalences and sorting devices. We further give algorithms to prove a complete description of preimages of pattern classes under certain sorting devices. These generalize an algorithm of Claesson and Ulfarsson (2012) and allow us to prove a linear time algorithm for finding occurrences of the pattern 4312.
\end{abstract}

\keywords{Permutation Patterns, Sorting algorithms}

\maketitle


\setcounter{tocdepth}{1}
\tableofcontents

\section{Introduction}
\label{sec:in}

A \emph{permutation} of length $n$ is a bijection from the set $\{1, \dotsc, n\}$ to itself. We write
permutations in the \emph{one-line notation}, where $\pi_1 \pi_2 \dotsm \pi_n$ is
the permutation that sends $i$ to $\pi_i$. If $w = w_1 w_2 \dotsm w_k$ is a word of distinct integers,
$\fl(w)$ is the permutation obtained by replacing the $i$th smallest letter in $w$ with $i$. This is
called the \emph{flattening} of $w$.
A permutation $\pi$ of length $n$ \emph{contains} a permutation $p$ of length $k$ if there exist indices
$1 \leq j_1 < j_2 < \dotsm < j_k \leq n$ such that $\fl(\pi_{j_1} \pi_{j_2} \dotsm \pi_{j_k}) = p$. In this context $p$ is called a \emph{(classical) pattern}. If $\pi$ does not contain $p$
then it \emph{avoids} $p$. We let $\Av(P)$ denote the set of permutations that avoid all the patterns in
a set $P$.

This extended abstract summarizes two papers, \cite{U} and \cite{HU}, which treat algorithms and
permutation patterns.
The first paper introduces an algorithm, $\bisc$,  which was inspired by a question posed
by Billey~\cite{SaraTalk}. Its input is a set of permutations and its output
is a set of mesh patterns that the permutations avoid. Mesh patterns are a type of generalized
pattern introduced by Br{\"a}nd{\'e}n and Claesson~\cite{BC}.
$\bisc$ can rediscover the statements of well-known
theorems describing properties of permutations with patterns. A few
examples may be seen in Tab.~\ref{tab:props}.

\begin{table}[htdp]
\begin{center}
\begin{tabular}{lll}
\toprule
Property                              & Forbidden patterns               & Reference \\
\midrule
stack-sortable                    & $231$                                    & \cite[Section 2.2.1, Exc.~4]{K} \\
West-$2$-stack-sortable & $2341$, $(3241,\{(1,4)\})$ & \cite[Thm.~4.2.18]{W90} \\
simsun                               & $(321,\{(1,0),(1,1),(2,2)\})$  & \cite[p.~7]{BC} \\
smooth                                & $1324$, $2143$                  & \cite[Thm.~1]{LS} \\
forest-like                            & $1324$, $(2143,\{(2,2)\})$  & \cite[Thm.~1]{BMB} \\
\bottomrule
\end{tabular}
\end{center}
\caption{Some statements of known theorems $\bisc$ can rediscover}
\label{tab:props}
\end{table}%

\noindent
We emphasize that $\bisc$ discovers statements like the ones in
Tab.~\ref{tab:props}, but does not supply a proof. In subsection~\ref{subsec:appl} we present
new theorems found by us and others using $\bisc$.

Section~\ref{sec:sortingalgo} summarizes the second paper~\cite{HU}, on algorithms that
prove a complete description of preimages of classical pattern classes under a sorting operator.
These are based on algorithms in~\cite{CU} for the stack-sorting and bubble-sorting operator.
We give a common generalization to a stack of any depth, as well as algorithms for sorting with a queue,
a pop stack, with insertion and with the pancake sorting method. We only describe the first two in this extended abstract.
We give a linear time (in the size of the input permutation) algorithm for recognizing the
pattern $4312$. This algorithm was discovered with $\bisc$
and proven with the preimage algorithms for a queue and a stack.
It is the first linear algorithm for recognizing a pattern of length greater than $3$ (besides the increasing
and decreasing patterns). Finally, we extend the preimage algorithm for a stack to preimages of certain mesh pattern classes, enabling us to give a fully automatic proof of the description of West-$3$-stack-sortable permutations,
first done in~\cite{W3}.

The algorithms treated here have been implemented in the computer algebra system Sage and are available at
\href{ham}{http://staff.ru.is/henningu/programs/pattalgos2012/pattalgos2012.html}.

\section{Learning mesh patterns}
\label{sec:grim}

Given a set of permutations $A$, we call a set of patterns $b$ a \emph{base} for $A$ if
$A = \Av(b)$.
There is a well-known algorithm that can find bases consisting only of classical patterns, which proceeds as follows if the input is the set of permutations below.
\begin{align*}
&1,
12,
21,
123,
132,
213,
312,
321,
1234,
1243,
1324,
1423,
1432,
2134,
2143,
3124,
3214,
4123,\\
&4132,
4213,
4321.
\end{align*}
The first missing permutation is $231$ so we add it to our potential base $b = \{231\}$.
When we reach the permutations of length $4$ we see that $1324$, $2314$,
$2341$, $2413$, $2431$, $3142$, $3241$, $3412$, $3421$, $4231$ and $4312$ are all missing.
All of these, except the last, contain the pattern in the base so we should expect them to be missing.
The last one, the permutation $4312$, does not contain the pattern $231$ so we extend the base,
$b = \{231, 4312\}$. If the input had permutations of length $5$ we would continue in the
same manner: checking whether the missing permutations contain a previously forbidden pattern, and
if not, extend the base by adding new classical patterns.

As the next example shows, we must also make sure that no permutation in the input contains a
previously forbidden pattern. Consider the West-$2$-stack-sortable permutations~\cite{W90}
\begin{align*}
&1,
12,
21,
123,
132,
213,
231,
312,
321,
1234,
1243,
1324,
1342,
1423,
1432,
2134,
2143,
2314,\\
&2413,
2431,
3124,
3142,
3214,
3412,
3421,
4123,
4132,
4213,
4231,
4312,
4321, \dotsc,
35241, \dotsc.
\end{align*}
The first missing permutations are $2341$ and $3241$ so $b = \{2341,3241\}$.
There are many missing permutations of length $5$ and all of them
are consequences of $2341$ being forbidden, e.g., $34152$ is not in the input since the subword
$3452$ is an occurrence of $2341$. But one of the input permutations is $35241$, which
contains the previously forbidden pattern $3241$. It seems that the presence of the
$5$ inside the occurrence of $3241$ in $35241$ is important. Here we need the notion of
mesh patterns~\cite{BC}, which allow us to forbid letters from occupying certain regions in
a pattern.
We must find a shading $R$ such that the mesh pattern $(3241,R)$ is contained in the permutation $3241$ but avoided
by $35241$. In this case it is easy to guess $R = \{(1,4)\}$ (the top-most square between
$3$ and $2$), which in fact is the correct choice: the base $b = \{2341, (3241,\{(1,4)\}\}$ exactly describes the West-$2$-stack-sortable permutations.

Now consider a more difficult input,
\begin{equation} \label{eq:difficult2}
1,21,321,2341,4123,4321.
\end{equation}
When we see that the permutation $12$ is missing we add the mesh pattern $(12,\emptyset)$ to our base. The only
permutation of length $3$ in the input is $321$, and this is consistent with our current
base. We see that every permutation of length four is forbidden up to the permutation $2341$, which
makes sense since each of those contains $(12,\emptyset)$.
But $2341$ also contains $12$,
in fact it contains several occurrences of it: $23$, $24$ and $34$. These occurrences tell us that the following mesh patterns are actually allowed.
\begin{equation} \label{eq:length2allowed}
\mpattern{scale=\pattdispscale}{ 2 }{ 1/1, 2/2 }{0/0,0/1,0/2,1/0,1/1,1/2,2/1} \qquad
\mpattern{scale=\pattdispscale}{ 2 }{ 1/1, 2/2 }{0/0,0/1,0/2,1/0,1/2,2/1,2/2} \qquad
\mpattern{scale=\pattdispscale}{ 2 }{ 1/1, 2/2 }{0/1,0/2,1/0,1/1,1/2,2/1,2/2}
\end{equation}
It is tempting to guess that what is actually forbidden is $\mpattern{scale=\patttextscale}{2}{ 1/1, 2/2 }{2/0}$. We
are forced to revisit all the permutations that are not in the input to check whether they
contain our modified forbidden pattern. This fails for the permutation $231$. We must
add something to our base, but what? Based on the mesh patterns in \eqref{eq:length2allowed}
we could add $\mpattern{scale=\patttextscale}{2}{ 1/1, 2/2 }{0/0,1/1,2/2}$, which is contained in $231$ but not
contained in any permutation in the input we have looked at up to now. From this small example it
should be clear that it quickly becomes difficult to go back and forth in the input, modifying the currently
forbidden patterns, making sure that they are not contained in any permutation in the input, while still
being contained in the permutations not in the input. We need a more unified approach. But
first, if the reader is curious the permutations in \eqref{eq:difficult2} are the permutations of length at most $4$ in the set
\begin{equation}
\label{eq:av12}
\Av \left( \mpattern{scale=\pattdispscale}{ 2 }{ 1/1, 2/2 }{0/0,1/1,2/2},\mpattern{scale=\pattdispscale}{ 2 }{ 1/1, 2/2 }{0/2,1/1,2/0} \right).
\end{equation}

\subsection{The $\bisc$ algorithm}
In the last example a mesh pattern with the underlying classical pattern $12$ first appeared in a permutation of length $4$.
This motivates the first step in the high-level description of $\bisc$:
\begin{enumerate}[(1)]
\item \label{grimsketch1} Search the permutations in the input and record which mesh patterns are allowed
\item \label{grimsketch2} Infer the forbidden patterns from the allowed patterns found in step~\eqref{grimsketch1}
\end{enumerate}
More precisely, step~\eqref{grimsketch1} is accomplished with Algorithm~\ref{algo:stein}.
\begin{algorithm}[htdp]
  \DontPrintSemicolon
  \KwIn{$A$, a finite set of permutations; and $m$, an upper bound on length of the patterns
to search for}
  \KwOut{A list, $S$, consisting of $(p,\sh_p)$ for all classical patterns of length at most $m$}
  Initialize $S = \{ (p,\emptyset) \colon p \textrm{ classical pattern of length at most } m\}$\\
  \For {$\pi \in A$}
  {
    \For{$s \in \subwords_{\leq m}(\pi)$} 
    { \nllabel{algo:steinsubw}
      Let $p = \fl(s)$\; \nllabel{algo:steinfl}
      Let $R$ be the maximal shading of $p$ for the occurrence $s$ in $\pi$\; \nllabel{algo:steinshmax}
      \If {$R \nsubseteq T$ for all shadings $T \in \sh_p$}
      { \nllabel{algo:steinsh}
        Add $R$ to $\sh_p$\;
      }
    }
  }
  \caption{$\mine$}\label{algo:stein}
\end{algorithm}

\noindent
In line~\ref{algo:steinsubw} $\subwords_{\leq m}(\pi)$ is the set of (not necessarily consecutive)
subwords of length at most $m$ in the permutation $\pi$, e.g., $\subwords_{\leq 2}(1324) =
\{13,12,14,32,34,24,1,3,2,4\}$.
In line~\ref{algo:steinshmax} we let $R$ be the maximal shading that can be applied to the classical
pattern $p$ while ensuring that $s$ is still an occurrence of $(p,R)$, see Fig.~\ref{fig:mineEx}.

\begin{figure}[htdp]
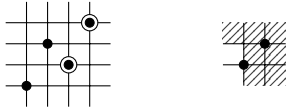

\begin{center}
\mpatternwwo{scale=\pattdispscale}{ 4 }{ 1/1, 2/3, 3/2, 4/4 }{}{3/2, 4/4}{} \qquad \mpattern{scale=\pattdispscale}{ 2 }{ 1/1, 2/2 }{0/2, 1/0, 1/1, 1/2, 2/0, 2/1, 2/2 }
\caption{Applying the maximal shading to the classical pattern $12$ so that $s = 24$ is still an occurrence in $1324$}
\label{fig:mineEx}
\end{center}
\end{figure}

\noindent
In line~\ref{algo:steinsh}, $\sh_p$ is a set we maintain of maximal shadings of $p$. For example
if we see that
$
\mpattern{scale=\patttextscale}{ 2 }{ 1/1, 2/2 }{0/0,1/1,2/2}
$
is an allowed shading, and then later come across
$
\mpattern{scale=\patttextscale}{ 2 }{ 1/1, 2/2 }{0/0,1/1,2/2,0/2,2/0}
$
we will only store the second shading, since the first one has now become redundant.

\begin{lemma} \label{lem:R'}
If $(p,R)$ is any mesh pattern, of length not longer than $m$, that occurs in a permutation in $A$
then there exists a $(p,R')$ in the output of $\mine(A,m)$ such that $R \subseteq R'$.
\end{lemma}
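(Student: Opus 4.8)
The plan is to unwind the definitions and trace what happens to a given occurrence of $(p,R)$ as it is processed by $\mine$. Suppose $(p,R)$ occurs in some $\pi \in A$; fix a witnessing occurrence, i.e.\ a subword $s$ of $\pi$ with $\fl(s) = p$ and such that the shaded regions of $R$ contain no points of $\pi$ (relative to the occurrence $s$). Since $p$ has length at most $m$, the subword $s$ appears in $\subwords_{\leq m}(\pi)$, so the inner loop of Algorithm~\ref{algo:stein} will at some point process this particular $s$.

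Next I would argue that the maximal shading $R_{\max}$ computed on line~\ref{algo:steinshmax} for this occurrence satisfies $R \subseteq R_{\max}$. This is essentially the defining property of ``maximal shading that can be applied to $p$ while keeping $s$ an occurrence'': since $R$ is \emph{some} shading with this property and $R_{\max}$ is the largest such, any box in $R$ is a box in $R_{\max}$. (I would make this precise by noting that the condition ``$s$ is still an occurrence of $(p,T)$'' is monotone downward in $T$ and closed under unions of shadings, so there is a unique maximum, namely the set of all boxes of $p$ containing no point of $\pi$ in the corresponding region; $R$ is contained in this set by hypothesis.) So after this iteration we have a candidate $R_{\max} \supseteq R$ with $\fl(s)=p$.

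Then I would track what the test on line~\ref{algo:steinsh} does with $R_{\max}$. Either $R_{\max}$ is added directly to $\sh_p$, in which case $R_{\max}$ itself is in the final $\sh_p$ (unless it is later superseded — see below), or there already is some $T \in \sh_p$ with $R_{\max} \subseteq T$, and again $R \subseteq R_{\max} \subseteq T$. The remaining subtlety is that an element of $\sh_p$ present at this stage might be \emph{removed} later, or $R_{\max}$ itself might be added now but never appear in the final output. Here I would invoke the invariant maintained by $\sh_p$: whenever a shading is dropped it is because a larger one (containing it) is or will be present, and whenever $R_{\max}$ fails the ``$R \nsubseteq T$'' test it is because a containing $T$ is already there. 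Formalizing this calls for an induction on the steps of the algorithm showing that at every point, for each shading $R_{\max}$ that was ever computed on line~\ref{algo:steinshmax}, there is some current element of $\sh_p$ containing it; in particular this holds at termination. Combining with $R \subseteq R_{\max}$ gives the claimed $R'$.

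I expect the main obstacle to be precisely this last bookkeeping point: pinning down the invariant ``every maximal shading ever seen is dominated by some current member of $\sh_p$'' and checking it survives both the ``add'' branch and the ``skip'' branch of line~\ref{algo:steinsh}, including transitively through chains of replacements. The geometric claim $R \subseteq R_{\max}$ is conceptually the heart of the lemma but is routine once one spells out that the maximal shading is literally the union of all admissible boxes; the genuine care is needed in the discrete dynamics of how $\sh_p$ evolves.
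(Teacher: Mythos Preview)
Your argument is correct and is the natural one. Note, however, that the paper is an extended abstract and does not actually supply a proof of this lemma; it is stated and immediately followed by the next lemma. So there is nothing to compare against beyond saying that your approach is exactly what the authors presumably have in mind: take a witnessing occurrence $s$, observe that the maximal shading $R_{\max}$ computed on line~\ref{algo:steinshmax} contains $R$ by definition of maximality, and then check that the bookkeeping in $\sh_p$ preserves the invariant ``every $R_{\max}$ ever computed is dominated by some current member of $\sh_p$.''

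One small remark on the bookkeeping you flag as the main obstacle: as written, Algorithm~\ref{algo:stein} only \emph{adds} to $\sh_p$ (line~\ref{algo:steinsh}); the pruning of dominated shadings is mentioned in the surrounding prose but not in the pseudocode. If you read the algorithm literally, the invariant is trivial, since $\sh_p$ only grows and either $R_{\max}$ itself is added or a $T \supseteq R_{\max}$ is already present. If you incorporate the implicit pruning step, your proposed induction handles it: whenever a shading is removed it is because a strictly larger one has just been added, so domination is transitive through the chain of replacements. Either way the argument goes through with no real difficulty; the ``genuine care'' you anticipate is lighter than you suggest.
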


\begin{lemma} \label{lemma:steincl}
If a set of permutations $A$ is defined by the avoidance of a (possibly infinite)
list of classical patterns $P$ and $(q,\sh_q)$ with $\sh_q \neq \emptyset$ is in the output of $\mine(A_{\leq n},m)$
then $q$ is not in the list $P$ and $\sh_q$ only contains the full shading of $q$.
\end{lemma}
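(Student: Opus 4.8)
The plan is to prove the two assertions in turn, both resting on the observation that when $P$ consists of classical patterns, $A=\Av(P)$ is closed under passing to classical patterns: if $q$ occurs classically in some $\pi\in A$ then $q\in A$, since otherwise $q$ would contain some $r\in P$ and hence so would $\pi$, contradicting $\pi\in\Av(P)$. From this the first assertion is immediate. Indeed, $\sh_q\neq\emptyset$ means some shading of $q$ was recorded, which happens only when line~\ref{algo:steinfl} produced $p=\fl(s)=q$ for a subword $s$ of some $\pi\in A_{\leq n}$, i.e.\ $q$ occurs classically in $\pi\in A=\Av(P)$; so $q$ cannot itself lie in $P$.

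Next I would show the full shading $R_{\mathrm{full}}$ of $q$ (every box shaded) is recorded. The occurrence $s\subseteq\pi$ above exhibits $q$ as a classical pattern of $\pi\in A$, so $q\in A$ by the closure property; and $|q|\leq m$ (the output only lists patterns of length at most $m$) while $|q|\leq n$ (an occurrence of $q$ fits in a permutation of length at most $n$), so $q$ is itself among the permutations the outer loop of Algorithm~\ref{algo:stein} iterates over. When it reaches $\pi=q$, the only subword with $\fl(s)=q$ is $q$ itself, and $\pi$ has no entries outside this occurrence, so every box of the grid is empty and the maximal shading computed on line~\ref{algo:steinshmax} is exactly $R_{\mathrm{full}}$. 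On line~\ref{algo:steinsh} this is added to $\sh_q$ unless already contained in a recorded shading; but $R_{\mathrm{full}}$ is the set of all boxes, so the only shading containing it is itself, and hence $R_{\mathrm{full}}\in\sh_q$ afterwards.

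Finally I would argue $R_{\mathrm{full}}$ is the sole member of $\sh_q$. Every shading produced on line~\ref{algo:steinshmax} is a subset of the set of all boxes, i.e.\ of $R_{\mathrm{full}}$; so once $R_{\mathrm{full}}\in\sh_q$, the test on line~\ref{algo:steinsh} fails for every later candidate and no further shading of $q$ is added (and any shading added earlier is made redundant). To see nothing is added before $R_{\mathrm{full}}$: no permutation shorter than $q$ contains $q$, and among permutations of length $|q|$ only $q$ contains $q$ (flattening a permutation returns itself), so processing the input in nondecreasing order of length --- the natural order, as in the examples of Section~\ref{sec:grim} --- puts $R_{\mathrm{full}}$ into $\sh_q$ before any competing shading of $q$ could appear. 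Hence $\sh_q=\{R_{\mathrm{full}}\}$.

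The only real content is the closure property $q\in\Av(P)$; once it forces $q$ itself into the input, the full shading must be recorded, and since that shading dominates every maximal shading the algorithm can compute, it is necessarily the only survivor. The remaining steps are bookkeeping about lines~\ref{algo:steinfl}--\ref{algo:steinsh}.
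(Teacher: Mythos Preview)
Your proof is correct and rests on the same idea as the paper's: transitivity of classical containment (your ``closure property'') forces $q\in A$, so $q$ itself is processed and the full shading is recorded; the paper phrases this contrapositively, but the content is identical. Your final argument about iteration order is unnecessary and relies on an assumption the algorithm does not state---since you already note that any earlier shading is made redundant once $R_{\mathrm{full}}$ enters $\sh_q$ (as the text following Algorithm~\ref{algo:stein} makes explicit), the conclusion $\sh_q=\{R_{\mathrm{full}}\}$ holds regardless of processing order.
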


\begin{proof}
If $\sh_q$ does not contain the complete shading then $q$ is not in $A$, but appears as a classical
pattern in some larger permutation $\pi$ in $A$. Since $q$ is not in $A$ it must contain one
of the forbidden patterns $p$ in $P$, which would imply that $p$ also occurs in $\pi$, contradicting
the fact that $\pi$ is in $A$.
\end{proof}

Step~\eqref{grimsketch2} in the high-level description is implemented in Algorithm~\ref{algo:grim}, where we generate the forbidden patterns from the allowed patterns in the output of $\mine$.
\begin{algorithm}[htdp]
  \DontPrintSemicolon
  \KwIn{$S$, a list of classical patterns along with shadings $\sh_p$}
  \KwOut{A list consisting of $(p,\forb_p)$ where $p$ is a classical pattern and $\forb_p$ is a set of minimal forbidden shadings}
  \For {$(p,\sh_p) \in S$}
  {
    Let $\forb_p$ be the minimal shadings of $p$ that are not contained in any member of $\sh_p$\; \nllabel{algo:grimforb}
    \For{$R \in \forb_p$}
    {
      \If{$R$ is a consequence of some shading in $\forb_q$ for a pattern $q$ contained in $p$}
      {\nllabel{algo:grimcons1}
        Remove $R$ from $\forb_p$\; \nllabel{algo:grimcons2}
      }
    }
  }
  \caption{$\stein$} \label{algo:grim}
\end{algorithm}

\noindent
To explain lines~\ref{algo:grimcons1}--\ref{algo:grimcons2} in the algorithm assume $p = 1243$ and the shading, $R$, on the left in Fig.~\ref{fig:forbEx} is one of the shadings
in $\forb_p$ on line~\ref{algo:grimforb}.
Assume also that earlier in the outer-most for-loop we generated $q = 12$ with the shading, $R'$
on the right in Fig.~\ref{fig:forbEx}. Then the shading $R$ is removed from $\forb_p$ on line~\ref{algo:grimcons2}
since any permutation containing $(p,R)$, also contains $(q,R')$.
\begin{figure}[htdp]
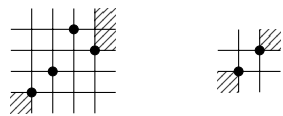

\begin{center}
\mpattern{scale=\pattdispscale}{ 4 }{ 1/1, 2/2, 3/4, 4/3 }{0/0, 4/3, 4/4 }\qquad
\mpattern{scale=\pattdispscale}{ 2 }{ 1/1, 2/2 }{0/0, 2/2 }
\caption{The mesh patterns $(p,R)$ and $(q,R')$}
\label{fig:forbEx}
\end{center}
\end{figure}

We now define $\bisc(A,m) = \stein(\mine(A,m))$.

\begin{theorem} \label{thm:biscMainThm}
Let $A$ be any set of permutations. Then for all positive integers $N \geq n$ and $m$
 \[
 A_{\leq n} \subseteq \Av(\bisc(A_{\leq N},m))_{\leq n}.
 \]
Furthermore, if the set $A$ is defined in terms of a finite list of patterns (classical or not),
 whose longest pattern has length $k$, and if $N \geq n \geq k$, $m \geq k$ then there is equality
 between the two sets above.
\end{theorem}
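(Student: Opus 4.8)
\medskip

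The statement splits into a containment, valid for all $N\ge n$ and all $m$, and an equality under the extra hypotheses. The containment is immediate from Lemma~\ref{lem:R'}. Suppose some $\pi\in A_{\le n}$ contained a mesh pattern $(p,R)$ output by $\bisc(A_{\le N},m)$; by the definition of $\stein$ we have $R\in\forb_p$, so $R$ is not contained in any member of $\sh_p$. But $\pi\in A_{\le N}$ since $n\le N$, the underlying classical pattern has length $|p|\le m$, and $(p,R)$ occurs in $\pi$, so Lemma~\ref{lem:R'} produces an $R'\in\sh_p$ with $R\subseteq R'$, a contradiction. Hence every $\pi\in A_{\le n}$ avoids every pattern of $\bisc(A_{\le N},m)$, which is exactly $A_{\le n}\subseteq\Av(\bisc(A_{\le N},m))_{\le n}$.

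Now assume in addition $A=\Av(P)$ for a finite list $P$ whose longest pattern has length $k$, and $N\ge n\ge k$, $m\ge k$. By the containment it remains to prove $\Av(\bisc(A_{\le N},m))_{\le n}\subseteq A_{\le n}$. Two preliminary facts will be used. First, combining Lemma~\ref{lem:R'} with the fact that $\mine$ records in $\sh_q$ exactly the maximal shadings coming from genuine occurrences in $A_{\le N}$, one gets: for every classical pattern $q$ with $|q|\le m$ and every shading $S$, the pattern $(q,S)$ is avoided by all of $A_{\le N}$ if and only if $S$ is not contained in any member of $\sh_q$; consequently the set $\forb_q$ computed on line~\ref{algo:grimforb} of $\stein$ is precisely the set of minimal shadings $S$ for which $(q,S)$ is avoided by all of $A_{\le N}$, and in particular every pattern output by $\bisc(A_{\le N},m)$ is avoided by all of $A_{\le N}$. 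Second, shrinking a shading cannot destroy an occurrence: if $S'\subseteq S$ and a permutation contains $(q,S)$, it contains $(q,S')$.

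Suppose, for contradiction, that $\pi$ has length at most $n$, avoids every pattern of $\bisc(A_{\le N},m)$, but $\pi\notin A$. Then $\pi$ contains some $(p_0,R_0)\in P$. We build a finite sequence $(q_0,S_0),(q_1,S_1),\dotsc$ of mesh patterns with $(q_0,S_0)=(p_0,R_0)$, maintaining the invariant that $\pi$ contains $(q_i,S_i)$, that $(q_i,S_i)$ is avoided by all of $A_{\le N}$, and that $|q_i|\le m$; this holds for $i=0$ because $A_{\le N}\subseteq\Av(P)$ and $|p_0|\le k\le m$. Given $(q_i,S_i)$ satisfying the invariant, the first fact gives a minimal shading $S_i'\subseteq S_i$ with $S_i'\in\forb_{q_i}$, and by the second fact $\pi$ contains $(q_i,S_i')$. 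If $(q_i,S_i')$ survives the consequence-removal step of $\stein$ then it is a pattern of $\bisc(A_{\le N},m)$ contained in $\pi$, contradicting the choice of $\pi$; otherwise it was removed, meaning $S_i'$ is a consequence of some $S_{i+1}\in\forb_{q_{i+1}}$ for a classical pattern $q_{i+1}$ strictly shorter than $q_i$ and contained in it. By the meaning of ``consequence'', $\pi$ then contains $(q_{i+1},S_{i+1})$; since $S_{i+1}\in\forb_{q_{i+1}}$ the first fact shows $(q_{i+1},S_{i+1})$ is avoided by all of $A_{\le N}$, and $|q_{i+1}|<|q_i|\le m$, so the invariant is preserved while the length of the underlying pattern strictly decreases. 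Since this length is a positive integer the process terminates, which can only happen at a step where we land in $\bisc(A_{\le N},m)$. This contradiction shows $\pi\in A$, hence $\Av(\bisc(A_{\le N},m))_{\le n}\subseteq A_{\le n}$, completing the proof.

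The routine ingredients are the monotonicity of mesh-pattern occurrences under shrinking the shading and the translation, via Lemma~\ref{lem:R'}, of membership in $\forb_q$ into an avoidance statement about $A_{\le N}$. The one point requiring care is the consequence-removal step of $\stein$: one must use that this step reduces a minimal forbidden shading of a pattern $q$ only to forbidden shadings of \emph{strictly shorter} patterns (the reading consistent with the running example of the algorithm), so that the descent on the length of the underlying pattern is well-founded and is forced to terminate at a pattern that genuinely survives into $\bisc(A_{\le N},m)$.
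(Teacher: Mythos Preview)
Your proof is correct and follows the same strategy as the paper's: Lemma~\ref{lem:R'} for the containment, and for the reverse inclusion the contrapositive showing that any $\pi$ containing a pattern from $P$ must contain some pattern in the output of $\bisc(A_{\le N},m)$. The paper disposes of the redundancy-removal step of $\stein$ with a brief ``without loss of generality $R''$ is not removed'', whereas your explicit descent on the length of the underlying classical pattern is precisely the argument that justifies that phrase.
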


\begin{proof}
To prove the subset relation let $\pi$ be a permutation in $A_{\leq n}$ and $(p,R)$ be a mesh pattern of length at most $m$
contained in $\pi$. By Lemma~\ref{lem:R'} there is a pattern $(p,R')$ in the output from
$\mine(A_{\leq N},m)$ such that $R \subseteq R'$.
This implies that $R$ is not one of the shadings in $\forb_p$ and therefore $(p,R)$
is not in the output of $\bisc(A_{\leq n},m)$.
To prove equality of the sets let $\pi$ be a permutation that is not in the set on the right because it contains
a mesh pattern $(p,R)$ from the list defining $A$. If $p$ never occurs as a classical pattern in a
permutation in $A_{\leq N}$ then $(p,\emptyset)$ is in the output of $\mine(A_{\leq N},m)$ which implies
that $(p,\emptyset)$ is in the output of $\bisc(A_{\leq N},m)$. This implies that $\pi$ is not in the set
on the right. If, however, $p$ occurs in some permutations in $A_{\leq N}$ then every occurrence of a mesh
pattern $(p,R')$ must satisfy $R \nsubseteq R'$. This implies that when $\forb_p$ is created in
line~\ref{algo:grimforb} in Algorithm~\ref{algo:grim} it contains $R$ or a non-empty subset, $R''$, of it.
We assume without loss of generality that $R''$ is not removed due to redundancy in line~\ref{algo:grimcons2}.
This implies that $(p,R'')$ is in the output of $\bisc(A_{\leq N},m)$ so $\pi$ is not in the set on the right.
\end{proof}

If the input $A$ to $\bisc$ is defined in terms of classical patterns we can strengthen the previous theorem.

\begin{theorem}
If a set of permutations $A$ is defined by the avoidance of a (possibly infinite)
list $P$ of classical patterns then the output of $\bisc(A_{\leq n},m)$ for any $n,m$ will consist only
of classical patterns.
Furthermore, if the longest patterns in the list $P$ have length $k$, then
$A = \Av(\bisc(A_{\leq n},m))$ for any $n,m \geq k$.
\end{theorem}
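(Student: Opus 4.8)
The plan is to treat the two claims separately: the first is a direct trace through $\stein$, and the second reduces to the finite-basis case already handled by Theorem~\ref{thm:biscMainThm}.

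\emph{First claim.} By Lemma~\ref{lemma:steincl}, since $A=\Av(P)$ with $P$ classical, every pair $(q,\sh_q)$ in the output of $\mine(A_{\leq n},m)$ has $\sh_q$ either empty or equal to the singleton consisting of the full shading of $q$. I would then check what Algorithm~\ref{algo:grim} does in each case. If $\sh_q=\emptyset$, the shadings of $q$ ``not contained in any member of $\sh_q$'' in line~\ref{algo:grimforb} are all shadings of $q$, whose unique $\subseteq$-minimal element is the empty shading, so $\forb_q=\{\emptyset\}$ and the only pattern on $q$ that can reach the output is the classical pattern $(q,\emptyset)$. If $\sh_q$ is the full shading, then every shading of $q$ is contained in it, so $\forb_q=\emptyset$ and $q$ contributes nothing. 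Hence $\bisc(A_{\leq n},m)$ consists only of classical patterns.

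\emph{Equality.} First note that $A$ is really finitely based: any $\sigma\notin A$ contains a pattern of $P$ of length at most $k$, so the minimal permutations avoided by $A$ form a finite set $P_{\min}\subseteq\Sym_{\leq k}$ with $A=\Av(P_{\min})$, and it suffices to show $\bisc(A_{\leq n},m)=P_{\min}$. One containment I would verify directly: if $p\in P_{\min}$ then $|p|\leq k\leq m$, and $p$ never occurs as a classical subpattern inside a permutation of $A_{\leq n}$ (an occurrence inside $\tau\in A$ would force $\tau$ to contain a pattern of $P$), so $\sh_p=\emptyset$ after $\mine$ and $\emptyset\in\forb_p$ in line~\ref{algo:grimforb}; moreover every proper subpattern $q$ of $p$ lies in $A$ and has $|q|<k\leq n$, so $q$ does occur in $A_{\leq n}$, whence $\sh_q\neq\emptyset$ and, by the preceding paragraph, $\forb_q=\emptyset$; so $(p,\emptyset)$ survives lines~\ref{algo:grimcons1}--\ref{algo:grimcons2} and $p\in\bisc(A_{\leq n},m)$. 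This already yields $\Av(\bisc(A_{\leq n},m))\subseteq\Av(P_{\min})=A$. For the reverse containment I would apply the equality clause of Theorem~\ref{thm:biscMainThm} to the finite list $P_{\min}$ with $N=n$, obtaining the truncated identity $A_{\leq n}=\Av(\bisc(A_{\leq n},m))_{\leq n}$, and then upgrade it: any classical pattern $b\in\bisc(A_{\leq n},m)$ has $\sh_b=\emptyset$, so $b$ does not occur in $A_{\leq n}$, so---once its length is seen to be at most $n$---$b\notin A$; then $b$ contains some element of $P_{\min}$, and were that containment proper the consequence-removal step of $\stein$ would have deleted $b$ (using $P_{\min}\subseteq\bisc(A_{\leq n},m)$ from the previous containment), so $b\in P_{\min}$.

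\emph{Main obstacle.} The delicate step is this last one: excluding spurious ``long'' classical patterns from $\bisc(A_{\leq n},m)$ and thereby passing from the truncated equality supplied by Theorem~\ref{thm:biscMainThm} to an equality of classes. A classical pattern of length exceeding $n$ trivially fails to occur in $A_{\leq n}$ and so is a length-only obstruction; one must check that the consequence-removal step of Algorithm~\ref{algo:grim} always deletes such a pattern in favour of a genuinely minimal shorter one, and it is exactly this bookkeeping that fixes the required relationship between $m$, $n$ and $k$. Granting it, the two containments combine to give $\bisc(A_{\leq n},m)=P_{\min}$, hence $A=\Av(\bisc(A_{\leq n},m))$.
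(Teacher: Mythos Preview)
Your proof of the first claim is correct and is exactly what the paper's appeal to Lemma~\ref{lemma:steincl} amounts to: once every $\sh_q$ is either empty or the full shading, $\stein$ can only emit classical patterns.

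For the second claim the paper simply defers to~\cite{HU}, so your argument is the only one on the table. Your reduction to the finite basis $P_{\min}$ is sound, and when $m\leq n$ your argument is complete: every $b$ in the output has $\sh_b=\emptyset$, and $|b|\leq m\leq n$ then forces $b\notin A$, so $b$ contains some $p\in P_{\min}$; if $p\subsetneq b$ the consequence-removal step deletes $(b,\emptyset)$ because $\emptyset\in\forb_p$ survives (as $p$ is minimal). Thus $\bisc(A_{\leq n},m)=P_{\min}$ and the equality of classes follows. (Your appeal to Theorem~\ref{thm:biscMainThm} is in fact not needed here; the direct argument you give afterwards already does the work.)

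However, the obstacle you flag for $m>n$ is not merely delicate---it is fatal to the statement as written. Take $A=\Av(21)$, so $k=2$, and set $n=2$, $m=3$. Then $A_{\leq 2}=\{\varepsilon,1,12\}$, and $\mine$ gives $\sh_{123}=\emptyset$. In $\stein$ every proper subpattern $q$ of $123$ lies in $A_{\leq 2}$, so $\sh_q$ contains the full shading and $\forb_q=\emptyset$; hence $(123,\emptyset)$ is \emph{not} removed. One gets $\bisc(A_{\leq 2},3)=\{21,123\}$ and $\Av(21,123)=\{\varepsilon,1,12\}\neq A$. So the ``Granting it'' in your last paragraph cannot be granted: the consequence-removal step does \emph{not} delete these length-only obstructions. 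The fix is to add the hypothesis $n\geq m$ (the natural regime anyway, since patterns longer than the longest input permutation are never witnessed by $\mine$), and under $n\geq m\geq k$ your proof is correct and complete.
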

\begin{proof}
This follows from Lemma~\ref{lemma:steincl}, see~\cite{HU} for the details.
\end{proof}

Some properties require us to look at very large permutations to discover patterns. For
example one must look at permutations of length $8$ in the set in~\eqref{eq:av12} above
to see that the mesh pattern $\mpattern{scale=\patttextscale}{ 2 }{ 1/1, 2/2 }{0/0,0/2,1/0,2/0,2/1,2/2}$ is allowed.
But as examples in the next section show it often suffices to look at permutations of length $m+1$ when
searching for mesh patterns of length $m$.

For some input sets $A$ there can be redundancy in the output $\bisc(A,m)$, in the sense that
some patterns can be omitted without changing $\Av(\bisc(A,m))$. We propose three ways to fix this
in~\cite{U}. This is never a problem in the following applications so we omit further details.

\subsection{Some applications of $\bisc$}
\label{subsec:appl}

\paragraph{Forbidden shapes in Young tableaux.}
There is a well-known bijection, called the Robinson-Schensted-Knuth-correspondence (RSK) between permutations of length $n$
and pairs of Young tableaux of the same shape. Let $A$ be the set of permutations whose image tableaux are hook-shaped. The output of $\bisc(A_{\leq 5},4)$ is the following four patterns.
\[
\mpattern{scale=\pattdispscale}{ 4 }{ 1/2, 2/1, 3/4, 4/3 }{} \qquad
\mpattern{scale=\pattdispscale}{ 4 }{ 1/3, 2/4, 3/1, 4/2 }{} \qquad
\mpattern{scale=\pattdispscale}{ 4 }{ 1/3, 2/1, 3/4, 4/2 }{2/2} \qquad
\mpattern{scale=\pattdispscale}{ 4 }{ 1/2, 2/4, 3/1, 4/3 }{2/2}
\]
Here we have rediscovered an observation made by Atkinson~\cite[Proof of Lemma 9]{A}
that the avoiders of the four patterns above are exactly the permutations whose tableaux are
hook-shaped.
We can view the property of being a hook-shaped tableau as not containing
the tableaux shape $(2,2)$ (two rows with two boxes each). It is natural to ask if we can
in general describe the permutations corresponding to tableaux not containing a particular
shape $\lambda$. E.g., let $A$ be the permutations whose tableaux avoid $\lambda = (3,2)$.
The output of $\bisc(A_{\leq 6},5)$ is $25$ mesh patterns that can be simplified to give the
following.
\begin{proposition}
The permutations whose tableaux under the RSK-correspondence avoid the shape $(3,2)$
are precisely the avoiders of the marked mesh patterns (\cite[Definition 4.5]{Unific}) below.
\[
\mmpatterna{scale=\pattdispscale}{ 4 }{ 1/2, 2/1, 3/4, 4/3 }{}{0/0/1/1/{},4/4/5/5/{}} \qquad
\mmpatternb{scale=\pattdispscale}{ 4 }{ 1/3, 2/4, 3/1, 4/2 }{}{1/3/2/4/1,3/1/4/2/{}} \qquad
\mmpattern{scale=\pattdispscale}{ 4 }{ 1/3, 2/1, 3/4, 4/2 }{2/2}{0/0/2/1/1,3/4/5/5/{}} \qquad
\mmpattern{scale=\pattdispscale}{ 4 }{ 1/2, 2/4, 3/1, 4/3 }{2/2}{0/0/1/2/1,4/3/5/5/{}}
\]
\end{proposition}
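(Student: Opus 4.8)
The plan is to recast the condition ``$\sh(\pi)$ avoids the shape $(3,2)$'' as a condition about classical pattern containment, using Schensted's theorem together with the characterisation of hook shapes rediscovered just above, and then to read that condition off the four marked mesh patterns. Write $\lambda=\sh(\pi)=(\lambda_1,\lambda_2,\dots)$ for the common shape of the two RSK tableaux of $\pi$ and let $A$ be the set of permutations with $\lambda\not\supseteq(3,2)$. Since the Young diagram $(3,2)$ has rows of lengths $3$ and $2$, one has $\lambda\not\supseteq(3,2)$ if and only if $\lambda_1\leq 2$ or $\lambda_2\leq 1$. By Schensted's theorem $\lambda_1$ equals the length of a longest increasing subsequence of $\pi$, so ``$\lambda_1\leq 2$'' is just ``$\pi$ avoids $123$''. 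A partition satisfies $\lambda_2\leq 1$ precisely when it is a hook, equivalently when it avoids the shape $(2,2)$, so ``$\lambda_2\leq 1$'' says the tableaux of $\pi$ are hook-shaped, which by Atkinson's lemma (\cite{A}) holds exactly when $\pi$ avoids all four patterns in
$
H=\{\,2143,\ 3412,\ (3142,\{(2,2)\}),\ (2413,\{(2,2)\})\,\}.
$
Therefore $A=\Av(123)\cup\Av(H)$, and its complement satisfies
\begin{equation}
\pi\notin A\quad\Longleftrightarrow\quad \pi\ \text{contains}\ 123\ \text{and}\ \pi\ \text{contains some}\ h\in H.\tag{$\star$}
\end{equation}

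One inclusion is then immediate. The underlying classical patterns of the four marked mesh patterns in the statement are exactly the members of $H$; each carries a mark forcing one extra point, and that point together with two of the four points of the $h$-occurrence it is built on forms an increasing triple, as one reads off the picture. Hence a permutation containing one of the displayed marked patterns contains both $123$ and a member of $H$, so by $(\star)$ it lies outside $A$, giving $\Av(\text{the four marked patterns})\subseteq A$.

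For the reverse inclusion I would first cut the problem down to a finite one. Given $\pi\notin A$, use $(\star)$ to fix an occurrence of $123$ and an occurrence of some $h\in H$ in $\pi$, and let $\sigma$ be the pattern of $\pi$ on the (at most $3+4=7$) positions they occupy. Then $\sigma$ still contains $123$, and $\sigma$ still contains $h$ because passing to a sub-pattern only deletes points and so keeps the relevant cell of an $h$-occurrence (when there is one) empty; by $(\star)$ applied to $\sigma$ we get $\sigma\notin A$ with $|\sigma|\leq 7$. Since containment is transitive, it suffices to check that every permutation of length at most $7$ lying outside $A$ contains one of the four displayed marked patterns; combined with the previous paragraph this yields $A=\Av(\text{the four marked patterns})$. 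This last step is a finite verification, and is precisely what the passage from the $25$ mesh patterns produced by $\bisc(A_{\leq 6},5)$ to the four marked patterns accomplishes.

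The heart of the matter is this finite verification, and in particular checking that for each $h\in H$ the region marked in the corresponding pattern is chosen correctly: large enough that every placement of the forced extra point which pushes $\lambda$ to contain $(3,2)$ is accounted for, yet small enough that every placement inside the region really does force $\lambda\supseteq(3,2)$; moreover, for the two members of $H$ carrying the shaded cell $(2,2)$ the marked region must stay clear of that cell, so the accompanying $h$-occurrence is not destroyed. Once the length of a minimal bad permutation has been bounded (and one checks it is in fact $5$), this reduces to inspecting a short list of permutations and matching each against one of four patterns — routine, but where essentially all of the combinatorial content sits; in the extended-abstract setting I would carry it out by the computer verification already performed, deferring a by-hand proof of the matching to the full paper.
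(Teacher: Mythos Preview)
The paper itself offers no proof of this proposition: in this extended abstract it is simply recorded that $\bisc(A_{\le 6},5)$ returns $25$ mesh patterns which ``can be simplified'' to the four displayed marked patterns, and the matter is left there. So there is no paper proof to compare against; what follows concerns only the internal correctness of your argument.

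Your characterisation $(\star)$ is the natural way in and is correct: Schensted gives $\lambda_1\le 2\iff\pi\in\Av(123)$, Atkinson's lemma gives $\lambda_2\le 1\iff\pi\in\Av(H)$, hence $\pi\notin A$ iff $\pi$ contains $123$ and some $h\in H$. The forward direction --- each displayed pattern forces both a member of $H$ and an increasing triple --- is also fine once one reads the pictures.

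The gap is in your reduction of the reverse direction to a finite check. You restrict $\pi$ to the at most seven letters carrying a fixed $123$ and a fixed $h$, call the resulting pattern $\sigma$, note $\sigma\notin A$, and then invoke ``containment is transitive'' to say it suffices to verify the claim for all $\sigma$ of length $\le 7$. But two of the four displayed patterns carry a genuine mesh shading (the box $(2,2)$ in the $3142$- and $2413$-based ones), and containment of a shaded pattern does \emph{not} pass upward along classical containment: an occurrence of such a pattern in $\sigma$ can be destroyed in $\pi$ by a letter of $\pi\setminus\sigma$ landing in the box $(2,2)$. So a verification for all $\sigma$ of length $\le 7$ does not, on its own, establish the result for arbitrary $\pi$.

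The repair is to argue relative to the $h$-occurrence $J$ you already fixed in $\pi$, where the box $(2,2)$ is empty \emph{in $\pi$} by hypothesis. What must be checked, for each $h\in H$, is that if no letter of $\pi$ lies in the marked region of $m_h$ relative to $J$ then $\pi$ avoids $123$; equivalently, that the unmarked cells around $h$ together with its four letters cannot support an increasing triple. This is still a finite case analysis, but it is a statement about the cell decomposition around a single occurrence rather than about permutations of bounded length, and it lifts to all $\pi$ precisely because the shading is anchored at $J$. Your length-$7$ check is neither necessary nor sufficient as stated.
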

The meaning of a marked region is that it should contain at least one point, and therefore
the first pattern above corresponds to $9$ classical patterns.

Crites et al.~\cite{CPW} showed that if a permutation $\pi$ contains a separable classical pattern
$p$ then the tableaux shape of $p$ is contained in the tableaux shape of $\pi$. Atkinson's observation
and the proposition above suggest a more general result holds if one considers mesh
patterns instead of classical patterns.

\paragraph{Forbidden tree patterns.}
Pudwell et al.~\cite{LaraTalk} used the bijection between binary trees and $231$-avoiding permutations and
the $\bisc$ algorithm to discover a correspondence between tree patterns and
mesh patterns. This can be used to generate Catalan many Wilf-equivalent sets of the form $\Av(231,p)$
where $p$ is a mesh pattern. The tree in Fig.~\ref{fig:bintree} gives the Wilf-equivalence between the
sets $\Av(231,654321)$ and $\Av(231,(126345,\{(1,6),(4,5),(4,6)\})$.

\begin{figure}[htb!]
\begin{center}
\begin{tikzpicture}[scale = 0.4, place/.style = {circle,draw = black!50,fill = gray!20,thick,inner sep=2pt}, auto]

 \node [place] (1) at (0,2)     {};
 \node [place] (2) at (2,1)      {};
 \node [place] (3) at (1,0)     {};
 \node [place] (4) at (0,-1)       {};
 \node [place] (5) at (-2,1)       {};
 \node [place] (6) at (-1,0)     {};
 \node [place] (7) at (-3,0)     {};
 \node [place] (8) at (-4,-1)     {};
 \node [place] (9) at (-2,-1)     {};
 \node [place] (10) at (3,0)     {};
 \node [place] (11) at (2,-1)     {};
 \node [place] (12) at (1,-2)     {};
 \node [place] (13) at (-1,-2)     {};
  
 \draw [-,semithick]      (1) to (2);
 \draw [-,semithick]      (2) to (3);
 \draw [-,semithick]      (3) to (4);
 \draw [-,semithick]      (1) to (5);
 \draw [-,semithick]      (5) to (6);
 \draw [-,semithick]      (5) to (7);
 \draw [-,semithick]      (7) to (8);
 \draw [-,semithick]      (7) to (9);
 \draw [-,semithick]      (2) to (10);
 \draw [-,semithick]      (3) to (11);
 \draw [-,semithick]      (4) to (12);
 \draw [-,semithick]      (4) to (13);
 
 \draw [<->, thick] (4,0) to (5,0);
 
\node [] (x) at (5.5,0) [label = right:$\mpattern{scale=\pattdispscale}{ 6 }{ 1/1, 2/2, 3/6, 4/3, 5/4, 6/5 }{1/6,4/5,4/6}$] {};

\end{tikzpicture}
\caption{A binary tree and its corresponding mesh pattern, see~\cite{LaraTalk} }
\label{fig:bintree}
\end{center}
\end{figure}
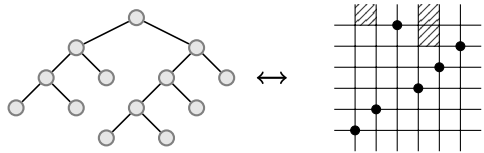

\paragraph{$1$-quicksortable permutations.}
Consider the following sorting method $\Pi$: If the input permutation
is empty, return the empty permutation. If the input permutation $\pi$ is not empty
and contains strong fixed points,
let $x$ be the right-most such point, write $\pi = \alpha x \beta$ and recursively apply the
method to $\alpha$ and $\beta$. If $\pi$ does not contain a strong fixed point we move every
letter in $\pi$ that is smaller than the first letter to the start of the permutation.
This operator was proposed by Claesson~\cite{AC} as a single pass
of quicksort~\cite{Hoare}. Let $A$ be the set of permutations that are sortable in
at most one pass under this operator. The output of $\bisc(A_{\leq 5},4)$ is the following.
\[
\mpattern{scale=\pattdispscale}{ 3 }{ 1/3, 2/2, 3/1 }{} \qquad
\mpattern{scale=\pattdispscale}{ 4 }{ 1/2, 2/4, 3/1, 4/3 }{} \qquad
\mpattern{scale=\pattdispscale}{ 4 }{ 1/2, 2/1, 3/4, 4/3 }{2/2}
\]
Stuart Hannah~\cite{H} verified that the sortable permutations are precisely the avoiders
of these three patterns and, furthermore, that they are in bijection
with words on the alphabet $(a + (bb\star)(cc\star))\star$ which are counted by the
sequence~\cite[A034943]{OEIS}.
\newline

The next section presents algorithms that can prove a complete description of
the preimage of pattern classes under two sorting devices: a stack of depth $d$ and
a queue. More devices are treated in~\cite{HU}.

\section{Sorting algorithms and preimages}
\label{sec:sortingalgo}

\subsection{Sorting with a stack of any depth}
  A \emph{stack} is a list with the restriction that elements can only be added and removed from one end of the list. We call this end the \emph{top} of the stack. The act of adding an element to a stack is called \emph{pushing}, and the act of removing is called \emph{popping}. The \emph{depth} of a stack is the number of elements it can hold along with one extra space used for passing elements through the stack. 
  
  Given an input permutation $\pi$, let $S(\pi)$ be the permutation obtained by the following procedure~\cite{K}: 
  \begin{inparaenum}
    \item If the stack is empty or the topmost element is larger than the first element of the input, $s$, push $s$ onto the stack.
    \item Otherwise, pop elements from the stack into the output permutation, until $s$ can be pushed onto the stack.
    \item Repeat this process until the input is empty.
    \item Empty the stack into the output permutation.
  \end{inparaenum}
  
  The same procedure can also be used with stacks of limited depth. We let $S_d(\pi)$ denote the permutation obtained by applying the procedure, with a stack of depth $d$, to a permutation $\pi$. It can be easily proven that for a permutation $\pi = \alpha n \beta$, where $n$ is the largest element of $\pi$, that
  \[
    S_d(\pi) = S_d(\alpha n \beta) = 
    \begin{cases}
      S_d(\alpha) S_{d-1}(\beta) n &\text{if $d > 1$,} \\
      \pi &\text{if $d = 1$.}
    \end{cases}
  \]
  We call $S_d$ the \emph{stack-sort operator with a stack of depth $d$}. If the result of applying $S_d$ to a permutation $\pi$ is the identity permutation, we say that $\pi$ is \emph{stack-sortable with a stack of depth $d$}.
  
  Note that $S_\infty = S$ is the stack-sort operator (of unlimited depth), and $S_2 = B$ is the bubble sort operator (see e.g.~\cite{AABCD}). If we apply $S_3$ to the permutation $45321$, then $4$ is pushed onto the stack, but immediately popped by the $5$. Now we have $4$ in the output and $5$ on the stack. The $3$ is then pushed onto the stack and the stack is now full. Finally $2$ and $1$ bypass the stack, and the stack is subsequently emptied into the output. Thus $S_3(45321) = 42135$.
  
  Algorithm 1 in~\cite{CU} gives a description of the preimage of any set defined by the avoidance of classical patterns for $S$. More precisely, given a classical pattern $p$, the algorithm outputs a set $M$ of marked mesh patterns such that 
  \[
    \Av(M) = \{ \pi : S(\pi) \in \Av(p) \}.
  \]
  
  The algorithm proceeds in two steps. In the first step classical patterns, called \emph{candidates}, are generated. In the second step, shadings and markings are added to the candidates, to produce the patterns that describe the preimage. Note that it might be impossible to add shadings or markings to some of the candidates produced by the first step. We will now give a generalization of this algorithm to handle the preimage of a stack of any depth.
  
  The analog of~\cite[Proposition 4.1]{CU}, which generates classical pattern candidates, is the following.
  \begin{proposition} \label{prop:cand}
  Let $p = \alpha n \beta$ be a permutation of a finite set of integers where $n$ is the largest element of $p$ and $\alpha = a_1 a_2 \dotsm a_i$. Then
  \[
    \cand_d(p) = 
    \begin{cases}
      \bigcup_{j = 0}^i \left\{ \gamma n \delta : \gamma \in \cand_d(a_1 a_2 \dotsm a_j) , \delta \in \cand_{d-1}(a_{j+1} \dotsm a_i \beta) \right\} &\text{if $d > 1$,} \\
      p &\text{if $d = 1$.}
    \end{cases}
  \]
  contains all classical patterns that can become $p$ after one pass of $S_d$.
  \end{proposition}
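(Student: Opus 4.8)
The plan is to prove the claim by induction, peeling off the largest letter so as to match both the recursion defining $\cand_d$ and the recursion $S_d(\alpha n \beta) = S_d(\alpha)\,S_{d-1}(\beta)\,n$. First I would pin down the meaning of ``can become $p$'': a classical pattern $q$ can become $p$ under one pass of $S_d$ if there is a permutation $\pi$ together with an occurrence of $q$ in $\pi$ whose selected positions, read in the order in which their images appear in $S_d(\pi)$, spell out an occurrence of $p$. In particular $|q| = |p|$, and because the letters selected in $\pi$ are literally the same letters that occupy the image positions in $S_d(\pi)$, the patterns $q$ and $p$ are obtained from those $\pi$-letters by one and the same order-preserving relabelling. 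This elementary remark is exactly what lets the pieces produced by the induction be glued back together on the correct value sets.

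I would induct on $d$, and for fixed $d$ on the length of $p$. The base case is $d = 1$: here $S_1 = \id$, so the image of a $q$-occurrence is that occurrence itself, forcing $q = p$, and indeed $\cand_1(p) = \{p\}$. For $d \geq 2$ write $p = \alpha n \beta$ with $n = \max p$ and $\alpha = a_1 \dotsm a_i$, and fix a witnessing $\pi$ carrying a $q$-occurrence onto a $p$-occurrence. Decompose $\pi = \Pi\, n'\, \Sigma$ with $n' = \max\pi$, so that $S_d(\pi) = S_d(\Pi)\, S_{d-1}(\Sigma)\, n'$. The images of the selected positions lie in the three consecutive blocks $S_d(\Pi)$, $S_{d-1}(\Sigma)$, $\{n'\}$, and since the occurrence map is order preserving the $q$-occurrence itself splits, left to right, as [letters taken from $\Pi$]$\,\cdot\,$[possibly $n'$]$\,\cdot\,$[letters taken from $\Sigma$].

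Next I would run the case analysis on whether $n'$ is used and, if not, on whether the largest selected letter $N$ (the preimage of the letter playing the role of $n$) lies in $\Pi$ or in $\Sigma$. In the cleanest case $n'$ is used: then $N = n'$, its image is the very last letter of $S_d(\pi)$, so $n$ is the last letter of its occurrence and $\beta = \emptyset$; moreover the $\Pi$-part of the occurrence maps onto a prefix $a_1 \dotsm a_j$ of $\alpha$ and the $\Sigma$-part onto the complementary suffix $a_{j+1} \dotsm a_i$. Applying the induction hypothesis to $\Pi$ (shorter, same depth) and to $\Sigma$ (depth $d-1$), and using the shared-relabelling remark to identify the $\Pi$-part and $\Sigma$-part of $q$ with a word $\gamma$ on the values $\{a_1, \dotsc, a_j\}$ and a word $\delta$ on $\{a_{j+1}, \dotsc, a_i\}$, one gets $\gamma \in \cand_d(a_1 \dotsm a_j)$ and $\delta \in \cand_{d-1}(a_{j+1} \dotsm a_i \beta)$; since $N = n'$ is the largest selected letter, $q = \gamma\, n\, \delta$, which is exactly the element of $\cand_d(p)$ indexed by this $j$. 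The remaining cases are handled the same way --- one still invokes the induction hypothesis on $\Pi$ and on $\Sigma$ --- but now part of $\beta$ may be carried into the $\Pi$-part of the occurrence, or $n$ itself may be realised inside the $\Sigma$-part, and after applying the hypothesis one is left with a concatenation of candidate pieces that must be rewritten into the $\gamma\, n\, \delta$ shape demanded by the definition of $\cand_d(p)$.

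The step I expect to be genuinely delicate is this last rewriting: showing that the candidate pieces coming from $\Pi$ and from $\Sigma$ really do recombine into a single element of $\cand_d(p)$ when the pattern occurrence straddles $n'$ or sits in the interior of the $\Pi$-block. This is not formally immediate from the recursion; it rests on auxiliary properties of $\cand$, namely that it is monotone in the depth parameter ($\cand_{d-1}(w) \subseteq \cand_d(w)$) and that it behaves correctly under splitting the underlying word --- roughly, that a $\cand_d$-element for a prefix placed in front of a $\cand_{d-1}$-element for the complementary suffix again lies in $\cand_d$ of the whole word. I would isolate these as lemmas and prove them by their own inductions on $(d,\text{length})$ running in parallel with the main one; everything else in the argument is routine bookkeeping about subwords and flattening.
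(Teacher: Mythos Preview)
The paper states this proposition without proof, presenting it as the depth-$d$ analogue of \cite[Proposition~4.1]{CU}; there is therefore no argument in the paper to compare against. Your inductive framework is the natural one, and the ``clean'' case --- where the maximum $n'$ of the witnessing permutation $\pi$ is one of the selected letters --- does go through exactly as you say.

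The genuine gap is in the remaining cases, and it is not just bookkeeping. The splitting lemma you propose --- roughly, that $\gamma\in\cand_d(u)$ and $\gamma''\in\cand_{d-1}(v)$ imply $\gamma\gamma''\in\cand_d(uv)$ --- is false. Take $d=3$, $u=43$, $v=12$: then $\cand_3(43)=\{43\}$ and $\cand_2(12)=\{12,21\}$, but $\cand_3(4312)=\{4312\}$, so $4321\notin\cand_3(4312)$. Worse, this same example appears to strike the proposition itself under the reading of ``can become $p$'' that you (reasonably) adopt: the permutation $\pi=453621$ contains $4321$ at the values $4,3,2,1$ but avoids $4312$ classically, while $S_3(\pi)=435126$ contains $4312$ at those same four values. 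Thus $4321$ becomes $4312$ after one pass of $S_3$ in your sense, yet $4321\notin\cand_3(4312)$.

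So either the intended meaning of ``can become $p$'' is narrower than the one you formalise --- for instance $S_d(q)=p$ on the nose, in which case the induction collapses to a direct application of the recursion $S_d(\gamma n\delta)=S_d(\gamma)S_{d-1}(\delta)n$ and none of your delicate cases arise --- or the statement as written is not correct and no amount of auxiliary lemmas will rescue the argument. Before investing further in the splitting lemmas you should pin down which reading is meant; the counterexample above shows that the two readings genuinely diverge for finite $d$ (they coincide only at $d=\infty$, where the right-hand recursion does not lose depth).
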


  To complete the algorithm, we need the notion of a \emph{decorated pattern}~\cite[Definition 2.2]{W3}. As was hinted at in the paper, we need a slight modification of these patterns, where some regions are \emph{required} to contain a pattern. We therefore view a decorated pattern as a $5$-tuple $(p, S, M, D, C)$, where $p$ is the underlying classical pattern, $S$ is the set of shaded squares, $M$ is the set of markings, $D$ is the set of avoidance decorations, and $C$ is the set of containment decorations.

  It can be easily seen that if $\sigma = S_d(\pi)$, then $\inv(\sigma) \subseteq \inv(\pi)$, i.e., when the stack-sort operator is applied to $\pi$, each inversion in $\pi$ either becomes an non-inversion in the output $\sigma$, or it stays an inversion. Non-inversions in $\pi$ must also be non-inversions in $\sigma$.
  
  The second step of the algorithm proceeds as follows. Suppose we have a candidate $\lambda \in \cand_d(p)$
  and a permutation $\pi$ containing $\lambda$.
  An inversion in $\lambda$, corresponding to the elements $a$ and $b$ in $\pi$, will remain an inversion
  after a pass through the stack if either of the two cases below applies.
  
  \begin{enumerate}[(1)]
  \item There is an element $c$, larger than $a$, that appears between $a$ and $b$ in $\pi$. In this case $a$ would be pushed onto the stack, and subsequently popped off by $c$, before $b$ would be pushed onto the stack. The relative order of $a$ and $b$ would therefore remain the same, after applying the stack-sort operator. This corresponds to the pattern $C_1$ in Tab.~\ref{tab:sk-preimages}.
  
  \item The stack is full of elements larger than $a$, when $a$ appears in the input. In this case $a$ bypasses the stack and appears before $b$ in the output. This happens precisely when there is a sequence of decreasing elements, all larger than $a$, that appear in $\pi$ before $a$. This sequence also has the property that all its elements are left-to-right maxima in $\pi$. Otherwise, at least one element of the sequence would be popped of the stack before $a$ appears. For a stack of depth $d+1$, this property is described by the pattern
  \[
    V_d = ( d (d-1) \dotsm 1, \{ (i,j) \in \dbrac{0,d} \times \dbrac{0,d} : (d - i) > j \} ).
  \]
  This case corresponds to the pattern $C_2$ in Tab.~\ref{tab:sk-preimages}, where the region decorated with $V_d$ must contain the pattern $V_d$.
  \end{enumerate}
  If neither of the two cases applies to an inversion in $\lambda$, it must become a non-inversion in the output. This corresponds to the pattern $C_3$ in Tab.~\ref{tab:sk-preimages}, where the shaded region decorated with $V_d$ must avoid the pattern $V_d$. This process is formalized in Algorithm \ref{algo:stackpreim}, which extends \cite[Algorithm 1]{CU}.
  
  \newcommand{\vthree}{
    \scalebox{.5}{$V_d$}
  }
  \begin{table}[hb!]
    \centering
    \begin{tabular}{p{4cm}ll}
      \toprule
      \textbf{Description} & \textbf{Candidates} & \textbf{Output}\\
      \midrule
      Inversions that stay inverted &
      $C_1 = \descpatt{scale=\patttablecale}{2}{2}{1/2,2/1}{}{1/2/2/3/1}{}{}, \quad
      C_2 = \descpatt{scale=\patttablecale}{2}{2}{1/2,2/1}{1/2/2/3}{}{}{0/2/1/3/{\vthree}}$
      & 
      \descpatt{scale=\patttablecale}{2}{2}{1/2,2/1}{}{}{}{} \\
      Inversions that become non-inversions &
      $C_3 = \descpatt{scale=\patttablecale}{2}{2}{1/2,2/1}{1/2/2/3}{}{0/2/1/3/{\vthree}}{}$
      &
      \descpatt{scale=\patttablecale}{2}{2}{1/1,2/2}{}{}{}{} \\
      \bottomrule
    \end{tabular}
    \caption{The decorations, markings and shadings needed for a stack of depth $d$}
    \label{tab:sk-preimages}
  \end{table}
  
  \begin{algorithm}[ht!]
    \DontPrintSemicolon
    \KwIn  {The depth $d$ of the stack; a pattern $p$; and $\lambda \in \cand_d(p)$}
    \KwOut {A (possibly empty) set of decorated patterns $T$}
    Let $T = \{ (\lambda, \emptyset, \emptyset, \emptyset, \emptyset) \}$ and let $n$ be the length of $\lambda$\;
    \For {$(i,j) \in \inv(\lambda)$}
    {
      Let $R_1 = \dbrac{\lambda^{-1}(i),\lambda^{-1}(j) - 1} \times \dbrac{i,n}$ and $R_2 = \dbrac{0,\lambda^{-1}(i) - 1} \times \dbrac{i,n}$\;
      \For {$r = (\lambda, S, M, D, C) \in T$}
      {
        Remove $r$ from $T$\;
        \uIf {$(i,j) \in \inv(p)$}
        {
          \If{$R_1 \not\subseteq S$}
          {
            \lIf{An element of $\lambda$ is contained in $R_1$}
            {
              Add $r$ to $T$\;
            }
            \lElse
            {
              Add $(\lambda, S, M \cup \{ (R_1 \setminus S, 1) \}, D, C)$ to $T$\;
            }
          }
          \If{No element of $\lambda$, no marking in $M$, or decoration in $C$, is contained in $R_1 \cup S$ and $R_2$ is not contained in any decoration in $D$}
          {
            Add $(\lambda, S \cup R_1, M, D, C \cup \{ (R_2,V_{d-1})) \}$ to $T$\;
          }
        }
        \ElseIf{No element of $\lambda$, no marking in $M$, or decoration in $C$, is contained in $R_1 \cup S$ and no decoration from $C$ is contained in $R_2$}
        {
          Add $(\lambda, S \cup R_1, M, D \cup \{ (R_2,V_{d-1}) \}, C)$ to $T$\;
        }
      }
    }
    \caption{An algorithm describing how a particular candidate must be decorated}
    \label{algo:stackpreim}
  \end{algorithm}


  By choosing $p=21$ and applying Algorithm~\ref{algo:stackpreim} we easily arrive at the following proposition.

  \begin{proposition} \label{prop:depthd}
    Stack-sortable permutations, with a stack of depth $d$, are precisely the avoiders of $231$ and $(d+1)d \dotsm 2 1$.
  \end{proposition}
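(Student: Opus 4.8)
The plan is to carry out, with $p=21$, the two-step procedure described just before Algorithm~\ref{algo:stackpreim}, and then translate the resulting decorated patterns into classical ones. First note this is a genuine instance of the preimage problem: $S_d(\pi)$ fails to be the identity exactly when it contains an inversion, i.e. an occurrence of $21$, so the permutations that are stack-sortable with a stack of depth $d$ are precisely $\{\pi : S_d(\pi)\in\Av(21)\}$. Next I would compute $\cand_d(21)$ from Proposition~\ref{prop:cand}: writing $21=\alpha\,n\,\beta$ with $n=2$ forces $\alpha$ empty and $\beta=1$, so the union over $j$ has the single term $j=0$, and since $\cand_k$ of the empty permutation is $\{\epsilon\}$ and $\cand_{d-1}(1)=\{1\}$, it unwinds to $\cand_d(21)=\{21\}$. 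Thus a single occurrence of $21$ is the only possible seed of an inversion of $S_d(\pi)$.

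Then I would run Algorithm~\ref{algo:stackpreim} on the candidate $\lambda=21$ (with $p=21$, depth $d$). There is exactly one inversion, the value pair $(2,1)$, and it is also an inversion of $p$. For it one has $R_1=\dbrac{1,1}\times\dbrac{2,2}$, the square lying strictly between the two points of $\lambda$ and above both, and $R_2=\dbrac{0,0}\times\dbrac{2,2}$, the square to the left of the first point and above both. Since $\lambda$ has no element, marking, or decoration inside $R_1$, the algorithm replaces the undecorated candidate by exactly the two patterns in the first row of Table~\ref{tab:sk-preimages}: $C_1$, with $R_1$ marked, and $C_2$, with $R_1$ shaded and $R_2$ carrying the containment decoration $V_{d-1}$ (the $C_3$-branch never fires because $(2,1)\in\inv(p)$). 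Hence the depth-$d$ stack-sortable permutations are exactly $\Av(C_1,C_2)$.

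It remains to show $\Av(C_1,C_2)=\Av(231,(d+1)d\dotsm21)$. The marked square of $C_1$ must hold a point lying above and between the two points of the underlying $21$; those three points, read in position order, form a $231$, and conversely every occurrence of $231$ realises $C_1$, so $\Av(C_1)=\Av(231)$. An occurrence of $C_2$ contains a $21$, say $a$ before $b$ with $a>b$, with nothing above $a$ between them, and, left of and above $a$, an occurrence of $V_{d-1}$ and hence of its underlying decreasing pattern $(d-1)(d-2)\dotsm1$; stacking those $d-1$ elements over $a,b$ produces an occurrence of $(d+1)d\dotsm21$, which gives $\Av(C_1,C_2)\supseteq\Av(231,(d+1)d\dotsm21)$ for free. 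The reverse inclusion is the real content, and I expect it to be the main obstacle: one must show that in a $231$-avoiding permutation a classical occurrence of $(d+1)d\dotsm21$ can always be chosen so as to realise the shadings of $C_2$ and of $V_{d-1}$. Emptiness of $R_1$ is immediate ($231$-avoidance forbids a point above $a$ between $a$ and $b$), and the shading of $V_{d-1}$ should follow by picking the occurrence greedily (each of the top $d-1$ elements as far left as possible among left-to-right maxima, and $a,b$ as low as possible) and invoking $231$-avoidance again to exclude intruders. A clean self-contained alternative for this last step is induction on $|\pi|$: writing $\pi=\alpha\,n\,\beta$ with $n$ maximal, the identity $S_d(\alpha n\beta)=S_d(\alpha)S_{d-1}(\beta)n$ shows $\pi$ is depth-$d$ sortable iff $\alpha$ uses the values $1,\dotsc,|\alpha|$, $\beta$ uses the rest, $\alpha$ is depth-$d$ sortable and $\beta$ is depth-$(d-1)$ sortable; a short case analysis of how occurrences of $231$ and of $(d+1)d\dotsm21$ split across $\alpha\cdot n\cdot\beta$ shows $\Av(231,(d+1)d\dotsm21)$ obeys the very same recursion, with base case $d=1$ (both sides equal $\Av(21)$), which finishes the proof.
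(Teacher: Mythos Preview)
Your proposal is correct. The first two thirds---computing $\cand_d(21)=\{21\}$ and tracing Algorithm~\ref{algo:stackpreim} to obtain the two decorated patterns $C_1,C_2$ of Table~\ref{tab:sk-preimages}---is exactly what the paper means by ``choosing $p=21$ and applying Algorithm~\ref{algo:stackpreim}'', only spelled out in full. The paper then simply says one ``easily arrives'' at the proposition and stops, leaving the translation of $\Av(C_1,C_2)$ into the classical description $\Av(231,(d{+}1)d\dotsm 21)$ implicit.

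Where you add something the paper does not is precisely that translation. You correctly note that $\Av(C_1)=\Av(231)$ and that any occurrence of $C_2$ yields a classical $(d{+}1)\dotsm 21$, but that the converse direction requires showing a $231$-avoider containing $(d{+}1)\dotsm 21$ must contain $C_2$ with the mesh and the $V_{d-1}$-decoration respected. Your greedy sketch for this is plausible but not fully argued; your second route, the double induction via the recursion $S_d(\alpha n\beta)=S_d(\alpha)S_{d-1}(\beta)n$, is clean and complete and in fact proves the proposition directly without invoking the algorithm at all. (One small wording issue: you call it ``induction on $|\pi|$'' but give ``base case $d=1$''; what you are really doing is induction on $d$ with an inner induction on $|\pi|$ for the $\alpha$-part.) So your inductive alternative is a genuinely different, more elementary argument than the paper's, which relies on the (externally proved) correctness of Algorithm~\ref{algo:stackpreim}; the trade-off is that the algorithmic route is what generalises uniformly to other target patterns $p$, while your induction is tailored to $p=21$.
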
%
  Note that Goodrich et al.~\cite{LaraTalk2} independently discovered this proposition.
  To describe the permutations sortable by two passes through a stack of depth $d$, one would need to describe the preimage of the avoiders of $p=231$. In this case $\cand_d(p) = \{231, 321\}$ (if $d > 1$). Algorithm~\ref{algo:stackpreim} then gives
  \[
    \descpatt{scale=\pattdispscale}{3}{3}{1/3,2/2,3/1}{1/3/2/4}{2/3/3/4/1}{0/3/1/4/\vthree}{} \qquad
    \descpatt{scale=\pattdispscale}{3}{3}{1/2,2/3,3/1}{}{2/3/3/4/1}{}{} \qquad
    \descpatt{scale=\pattdispscale}{3}{3}{1/2,2/3,3/1}{2/3/3/4}{}{}{0/3/2/4/\vthree}
  \]
  To complete the description, one also needs to determine the preimage of $(d+1)d \dotsm 1$, see~\cite{HU}.

\subsection{Sorting with a queue}
  A \emph{queue} is a list with the restrictions that elements can only be added to one end of the list, called the \emph{front} and removed from the other end of the list, called the \emph{back}. The act of adding an element to a queue is called \emph{enqueuing}, and the act of removing is called \emph{dequeuing}. 
  
  Given an input permutation $\pi$, let $Q(\pi)$ be the permutation obtained by the following procedure~\cite{K}: 
  \begin{inparaenum}
    \item If the queue is empty or the last element is smaller than the first element of the input, $s$, enqueue $s$.
    \item Otherwise, dequeue elements from the queue into the output permutation until the front element of the queue is larger than $s$, or the queue is empty. Add $s$ to the output permutation.
    \item Repeat this process until the input is empty.
    \item Empty the queue into the output permutation.
  \end{inparaenum}

  We will now sketch an algorithm for describing the preimage of any set defined by the avoidance of classical patterns. More precisely, given a classical pattern $p$, the algorithm outputs a set $M$ of decorated patterns such that $\Av(M) = \{ \pi : Q(\pi) \in \Av(p) \}$. In~\cite{HU} we give a proposition analogous to Proposition~\ref{prop:cand}, but here we consider the candidates $\{ \lambda : \inv( p ) \subseteq \inv( \lambda ) \}$ for simplicity. The preimage algorithm for $Q$ proceeds similar to the algorithm for a stack of depth $d$, except when we consider how inversions change, we use the patterns shown in Tab. \ref{tab:queuepreim}.

  \newcommand{\twoone}{\mpatternpic{.08}{2}{1/2, 2/1}{}}
  \begin{table}[htb!]
    \centering
    \begin{tabular}{ll}
      \toprule
      \textbf{Candidates} & \textbf{Output} \\
      \midrule
      $D_1 = \descpatt{scale=\patttablecale}{2}{2}{1/2,2/1}{}{0/2/1/3/1}{}{}, \quad
      D_2 = \descpatt{scale=\patttablecale}{2}{2}{1/2,2/1}{0/2/1/3}{}{}{1/2/2/3/{\twoone}}$
      &
      \descpatt{scale=\patttablecale}{2}{2}{1/2,2/1}{}{}{}{} \\
      
      $D_3 = \descpatt{scale=\patttablecale}{2}{2}{1/2,2/1}{0/2/1/3}{}{1/2/2/3/{\twoone}}{}$
      &
      \descpatt{scale=\patttablecale}{2}{2}{1/1,2/2}{}{}{}{} \\
      \bottomrule
    \end{tabular}
    \caption{The decorations, markings and shadings needed for a queue}
    \label{tab:queuepreim}
  \end{table}

  Let $r$ denote the reverse of a permutation, and $c$ denote the complement of a permutation. By using the algorithm for a queue and a stack~\cite{CU}, the following theorem is proved.

  \begin{theorem}
  A permutation $\pi$ avoids $4312$ if and only if $(S \circ r \circ c \circ Q)(\pi)$ is sorted.
  \end{theorem}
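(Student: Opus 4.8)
The plan is to peel the operators in $S\circ r\circ c\circ Q$ off from the outside in, reducing the condition ``$(S\circ r\circ c\circ Q)(\pi)$ is sorted'' to a single classical avoidance condition on $Q(\pi)$, and then to feed that condition into the queue preimage algorithm. First I would apply Knuth's theorem that a permutation $\sigma$ satisfies $S(\sigma)=\id$ exactly when $\sigma\in\Av(231)$ (the $d=\infty$ case of Proposition~\ref{prop:depthd}): hence $(S\circ r\circ c\circ Q)(\pi)$ is sorted iff $(r\circ c\circ Q)(\pi)\in\Av(231)$. Since $r$ and $c$ commute and are each involutions, $r\circ c$ is an involution on $\Sym_n$, and a direct computation gives $(r\circ c)(231)=312$; therefore $(r\circ c)(Q(\pi))\in\Av(231)$ iff $Q(\pi)\in\Av(312)$. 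So the theorem is equivalent to the identity
\[
  \{\pi : Q(\pi)\in\Av(312)\} \;=\; \Av(4312).
\]

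To establish this I would run the queue preimage algorithm --- the analog of Algorithm~\ref{algo:stackpreim} built from the decorations $D_1,D_2,D_3$ of Table~\ref{tab:queuepreim} --- on the input pattern $p=312$. Because a single pass of $Q$ never creates an inversion (so $\inv(Q(\tau))\subseteq\inv(\tau)$, mirroring the stack case), it is enough to take as candidates the permutations $\lambda$ of the same length as $p$ with $\inv(312)=\{(3,1),(3,2)\}\subseteq\inv(\lambda)$, namely $\lambda\in\{312,321\}$. For each of these two candidates I would carry out the decoration step: running over the inversions of $\lambda$, each inversion either forces an extra point into the region $R_1$ (case $D_1$), forces a copy of $21$ into the region $R_2$ (case $D_2$), or is required to become a non-inversion, shading $R_1$ and barring $21$ from $R_2$ (case $D_3$). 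This produces a finite set $M$ of decorated patterns with $\Av(M)=\{\pi : Q(\pi)\in\Av(312)\}$ by the correctness of the algorithm (proved in \cite{HU}). The last step is to simplify $M$ and verify that, once the markings (each region required to contain at least one point) and the $21$-containment/avoidance decorations are resolved, the whole set collapses to the single classical pattern $4312$; small checks such as $Q(4312)=3124$, which contains $312$, and $Q(4321)=3214\in\Av(312)$ already pin down the expected answer.

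The hard part will be this final simplification: tracking the decorated patterns emitted on the two candidates $312$ and $321$ and showing that the marked regions and the $21$-decorations combine to give exactly $\Av(4312)$, with no spurious surviving pattern. A subsidiary point that must be nailed down cleanly is the claim that $Q$ cannot create an inversion --- this is what keeps the candidate list finite and what underpins the borrowed correctness statement $\Av(M)=\{\pi : Q(\pi)\in\Av(312)\}$; everything else (the reduction in the first paragraph and the per-inversion bookkeeping in the algorithm) is routine.
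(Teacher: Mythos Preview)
Your proposal is correct and follows essentially the same route as the paper: the paper's one-line justification ``By using the algorithm for a queue and a stack'' is exactly what you unpack --- peel off $S$ via Knuth's characterisation $S^{-1}(\id)=\Av(231)$ (equivalently, the stack preimage algorithm applied to $p=21$), translate through the involution $r\circ c$ to reduce to $Q(\pi)\in\Av(312)$, and then run the queue preimage algorithm on the two candidates $312$ and $321$. The only work left, as you note, is the bookkeeping of simplifying the decorated patterns down to the single classical pattern $4312$; your auxiliary observation that $Q$ never creates an inversion is correct and is precisely what justifies restricting to candidates $\lambda$ with $\inv(312)\subseteq\inv(\lambda)$.
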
%
  Since all the maps in the composition $S \circ r \circ c \circ Q$ are linear time operators and
  it takes linear time to check whether the output is sorted, this provides a linear time algorithm
  for checking for the avoidance of $4312$. Such linear time algorithms have only been known for
  patterns of length at most $3$ as well as the increasing and decreasing patterns of any length
  ($1 2 \dotsm k$ and $k \dotsm 2 1$). Also see Albert et al.~\cite{nlogn} for algorithms with
  running time $n\log{n}$ for patterns of length $4$.

  %
  %
%
      %

  One can similarly find preimage algorithms for a pop-stack, insertion sort and pancake sort, see~\cite{HU}. In all of the algorithms above, we have only considered preimages of classical pattern classes. However, in~\cite{HU}, we extend the preimage algorithm for a stack to handle certain mesh pattern classes. This allows us to give an automatic proof of the description of the West-$3$-stack-sortable permutations.

\section*{Acknowledgments}
\label{sec:ack}
The second author would like to thank Sara Billey for asking the question that
led to the development of the $\bisc$ algorithm as well as helpful discussions
on how to implent it. He would also like to thank Anders Claesson for suggesting that
the forbidden patterns might somehow be inferred from the allowed patterns. He
would finally like to thank Einar Steingr{\'i}msson for many helpful discussions
on permutation patterns. The name of the algorithm is derived from the names of these
three people.

\bibliographystyle{alpha}
\bibliography{magulf-refs}
\label{sec:biblio}

\end{document}